\newtheorem{theorem}{Theorem}[section]
\newtheorem{lemma}[theorem]{Lemma}
\newtheorem{proposition}[theorem]{Proposition}
\newcommand{\R}{\mathbb{R}}
\newcommand{\bi}{\begin{itemize}}
\newcommand{\ei}{\end{itemize}}
\newcommand{\ba}{\begin{array}}
\newcommand{\ea}{\end{array}}
\newlength{\eqAlgoAfter}
\newlength{\eqAlgoBefore}
\begin{document}

\title{\textbf{Enhancing finite-difference-based derivative-free optimization methods with machine learning}}


\author[1]{Timoth\'e Taminiau\thanks{Email: timothe.taminiau@uclouvain.be. Supported Supported by "Fonds spéciaux de Recherche", UCLouvain.}}

\author[1]{Estelle Massart\thanks{Email: estelle.massart@uclouvain.be.}}

\author[1]{Geovani Nunes Grapiglia\thanks{Email: geovani.grapiglia@uclouvain.be. Partially supported by FRS-FNRS, Belgium (Grant CDR J.0081.23).}}

\affil[1]{Université catholique de Louvain, Department of Mathematical Engineering, ICTEAM, 1348 Louvain-la-Neuve, Belgium}

\date{February 10, 2025}

\maketitle

\begin{abstract}
Derivative-Free Optimization (DFO) involves methods that rely solely on evaluations of the objective function. One of the earliest strategies for designing DFO methods is to adapt first-order methods by replacing gradients with finite-difference approximations. The execution of such methods generates a rich dataset about the objective function, including iterate points, function values, approximate gradients, and successful step sizes. In this work, we propose a simple auxiliary procedure to leverage this dataset and enhance the performance of finite-difference-based DFO methods. Specifically, our procedure trains a surrogate model using the available data and applies the gradient method with Armijo line search to the surrogate until it fails to ensure sufficient decrease in the true objective function, in which case we revert to the original algorithm and improve our surrogate based on the new available information. As a proof of concept, we integrate this procedure with the derivative-free method proposed in (Optim. Lett. 18: 195--213, 2024). Numerical results demonstrate significant performance improvements, particularly when the approximate gradients are also used to train the surrogates.
\end{abstract}

\section{Introduction}
\label{section1}

In this work, we address the problem of minimizing a possibly nonconvex function $f:\mathbb{R}^{n}\to\mathbb{R}$ that is bounded from below and is assumed to be differentiable with a Lipschitz continuous gradient. We focus on the black-box setting, where $f(\,\cdot\,)$ is accessible only through function evaluations, i.e., given $x$, one can compute only $f(x)$. The minimization of black-box functions arises, for example, in the calibration of mathematical models that require computer simulations for evaluation, where we want to find the vector $x$ of model parameters that minimizes the misfit $f(x)$ between the model predictions and the available data \cite{chen_development_2020,grimmert_temperature_2023}. Another relevant class of applications is the synthesis of materials or molecules, where $x$ defines the design variables and $f(x)$ measures the discrepancy between the desired and actual material's properties, with the latter accessed through laboratory experiments \cite{frazier_bayesian_2016,korovina_chembo_2020,wang_accelerating_2024}. Optimization problems of this type require Derivative-Free Optimization (DFO) methods \cite{audet_derivative-free_2017,conn_introduction_2009,larson_derivative-free_2019}, which rely solely on function evaluations. 

Classical deterministic DFO methods can be categorized into three main classes: direct-search methods \cite{R3,R4}, model-based trust-region methods \cite{R2,R1}, and finite-difference-based methods \cite{Grapiglia,R5}. At the $k$-th iteration of a direct-search method, the objective function is evaluated at several poll points around the current iterate $x_{k}$. If one of these poll points yields a function value sufficiently smaller than $f(x_{k})$, it is accepted as the new iterate $x_{k+1}$. Otherwise, the method sets $x_{k+1}=x_{k}$ and evaluates a new, closer set of poll points.
In contrast, model-based trust-region methods construct an interpolation model of the objective function using function values at points near $x_{k}$. This model is minimized within a trust region, typically a ball centered at $x_{k}$. If the resulting solution provides sufficient decrease in the objective function, it is accepted as $x_{k+1}$; otherwise, the method sets $x_{k+1}=x_{k}$, reduces the trust-region radius, and updates the interpolation model accordingly.
Finite-difference-based methods, on the other hand, adapt standard first-order optimization techniques (e.g., Gradient Descent) by replacing the gradient $\nabla f(x_{k})$  with a finite-difference approximation $g(x_{k})$, combined with careful choices for the stepsizes. Each iteration of a deterministic DFO method usually requires $\mathcal{O}(n)$ evaluations of the objective function $f(\,\cdot\,)$, which are used to evaluate poll points, construct interpolation models, or compute finite-difference gradient approximations, depending on the type of method. However, very often in DFO problems, function evaluations are computationally expensive because they involve time-consuming computer simulations or laboratory experiments. Therefore, a key objective in DFO is to design methods that can efficiently approximate solutions with as few function evaluations as possible.

The execution of a DFO method generates an increasingly rich dataset about the objective function, comprising iterate points with associated function values, successful trust-region radii or step sizes, and potentially gradient approximations. In recent years, researchers have investigated the possibility of leveraging this data to improve the performance of DFO methods. The central idea is to use the collected data to train surrogate models and then use these models to identify promising candidate points or search directions, at a very low cost in terms of evaluations of $f(\,\cdot\,)$. For example, \cite{irwin_neural_2023} proposed the use of neural network surrogates to accelerate the Implicit Filtering DFO method \cite{kelley_implicit_2011}. Specifically, at the $k$-th iteration of their new method, NNAIF (Neural Network Accelerated Implicit Filtering), the dataset $\mathcal{F}=\left\{\left(y_{i},f(y_{i})\right)\right\}_{i=1}^{N}$ of all previously computed points with associated function values is used to train a neural network surrogate $m_{\theta}(\,\cdot\,)$ by approximately solving the problem
\begin{equation}
\min_{\theta}\,\,L_{\cal F}(\theta)\equiv \dfrac{1}{N}\sum_{i=1}^{N}\left(m_{\theta}(y_{i})-f(y_{i})\right)^{2}.
\label{eq:gg1}
\end{equation}
Then, Gradient Descent is applied to minimize $m_{\theta}(\,\cdot\,)$ with a fixed budget of iterations, returning a candidate point $x_{k}^{+}$. If 
\begin{equation*}
 f(x_{k}^{+})<f(x_{k})-\epsilon
\end{equation*}
for a user defined $\epsilon>0$, the method sets $x_{k+1}=x_{k}^{+}$. Otherwise, $x_{k+1}$ is computed from $x_{k}$ via a standard iteration of the Implicit Filtering method. The authors established a liminf-type global convergence result for NNAIF and reported numerical results on 31 noisy problems from the CUTEst collection, where NNAIF achieved significant improvements over the standard Implicit Filtering method on large-scale problems. In a different direction, \cite{giovannelli_limitation_2023} investigated surrogate-based variants of a Full-Low-Evaluation DFO method \cite{berahas_full-low_2023}, which originally relies on BFGS search directions computed using finite-difference gradient approximations. Specifically, instead of computing a finite-difference approximation of $\nabla f(x_{k})$, they used a dataset of the form $\mathcal{F}$ described above to train a surrogate model $m_{\theta}(\,\cdot\,)$ by solving (\ref{eq:gg1}). The search direction was then computed as 
\begin{equation*}
    p_{k}=-H_{k}\nabla m_{\theta}(x_{k}),
\end{equation*}
where matrix $H_{k}$ was obtained from $H_{k-1}$, $s_{k-1}=x_{k}-x_{k-1}$ and $y_{k-1}=\nabla m_{\theta}(x_{k})-g(x_{k-1})$, with $g(x_{k-1})$ being the gradient approximation used in the previous iteration. The authors conducted numerical experiments on a subset of CUTEst problems to test various types of surrogates, including polynomials, radial basis functions, and neural networks with different activation functions. However, none of the surrogate variants was able to outperform the base method that relies on finite-difference gradient approximations. 

Inspired by the promising results of \cite{irwin_neural_2023}, in this work we propose a new heuristic to enhance the performance of virtually any finite-difference-based DFO methods using surrogate models. Given a base method, our heuristic is invoked at the end of each (outer) iteration. Instead of relying solely on $\mathcal{F}$, it employs the augmented dataset $\mathcal{F}\cup\mathcal{G}$, with $\mathcal{G} = \left\{(z_{j},g(z_{j}))\right\}_{j=1}^{M}$, where $g(z_{j})\approx \nabla f(z_{j})$, to train a model $m_{\theta}(\,\cdot\,)$ by (approximately) solving the problem
\begin{align}
\min_{\theta}\,\,L_{\mathcal{F},\mathcal{G}}(\theta)\equiv \dfrac{1}{N}\sum_{i=1}^{N}\left(m_{\theta}(y_{i})-f(y_{i})\right)^{2}+\dfrac{1}{M}\sum_{j=1}^{M}\|\nabla m_{\theta}(z_{j})-g(z_{j})\|_{2}^{2} +\lambda\|\theta\|^{2}_{2},
\label{eq:gg2}
\end{align}
a technique commonly referred to as Sobolev learning in the Machine Learning community \cite{czarnecki_sobolev_2017}. Once the surrogate is trained, our heuristic applies Gradient Descent with Armijo line search to $m_{\theta}(\,\cdot\,)$ until it fails to ensure a sufficient decrease of the true objective function, returning then the best point found, which is taken as the next iterate point. As a proof of concept, we integrate this procedure with the derivative-free method proposed by \cite{grapiglia_worst-case_2023}. We prove that the enhanced method requires no more than $\mathcal{O}\left(n\epsilon^{-2}\right)$ function evaluations to find an $\epsilon$-approximate stationary point of $f(\,\cdot\,)$, where the constant factor in the complexity bound is inversely proportional to the average number of successful surrogate steps per outer iteration. Numerical experiments using radial basis functions and neural network surrogate models demonstrate significant performance improvements compared to the base method.

The paper is organized as follows. In \Cref{sec:surrogate_framework}, we present our surrogate-based heuristic and show how it can be incorporated into a DFO method based on finite differences. We establish a worst-case complexity bound that reflects the performance improvement that successful surrogate steps may produce. In \Cref{sec:models} we describe two classes of models that can be used as surrogates, namely, neural networks and radial basis functions. We also propose there a new interpretation of Sobolev learning (when combined with finite-difference gradients) as a form of model regularization that penalizes the curvature of the model. Finally, in \Cref{sec:numerics}, we present numerical results illustrating the performance gains achieved with our proposed heuristic.

\section{Surrogate-based heuristic}
\label{sec:surrogate_framework}

This section presents our surrogate-based heuristic, described in \Cref{algorithm3}, that can be integrated to any derivative-free optimization algorithm that relies on the (generally costly) computation of an approximation $g(x) \approx \nabla f(x)$.

\begin{algorithm}[H]
   \caption{\texttt{Surrogate}($v$, $f$, $\mathcal{F}$, $\mathcal{G}$, $\sigma$, $\rho$, $\lambda$, $\gamma$, $\epsilon$)}
   \label{algorithm3}
   \noindent\textbf{Inputs:} A reference point $v \in \mathbb{R}^n$, a zeroth-order oracle for $f(\,\cdot\,)$, datasets $\mathcal{F} = \{(y_i, f(y_i))\}_{i=1}^N$ and $\mathcal{G} = \{(z_j, g(z_j))\}_{j=1}^M$ with $(v, f(v)) \in \mathcal{F}$, and constants $\sigma, \rho, \lambda, \gamma, \epsilon > 0$.
   \\[0.15cm]
   \noindent\textbf{Step 1:}  Train a continuously differentiable surrogate model $m_\theta : \mathbb{R}^n \rightarrow \mathbb{R}$ by (approximately) solving (\ref{eq:gg2}). Set $v_0 = v$, $L_0 = \sigma$, $\mathcal{F}^+:= \emptyset$ and $t:= 0$.
   \\[0.15cm]
   \noindent\textbf{Step 2:} Find the smallest integer $\ell_t \geq 0$ such that the point 
   \begin{align*}
    \hat{v}_{t}=v_{t}-\frac{1}{2^{\ell_{t}}L_{t}}\nabla m_{\theta}(v_{t})
   \end{align*}
   satisfies the inequality
   \begin{align*}
   m_{\theta}(v_{t})-m_{\theta}(\hat{v}_{t})\geq\dfrac{\rho}{2^{\ell_{t}}L_{t}}\|\nabla m_{\theta}(v_{t})\|_{2}^{2}.    
   \end{align*}

\noindent\textbf{Step 3:} Compute $f(\hat{v}_{t})$ and set $\mathcal{F}^+ := \mathcal{F}^+ \cup \{(\hat{v}_t, f(\hat{v}_{t}))\}$.
\\[0.15cm]
\noindent\textbf{Step 4:} If
\begin{equation} \label{eq:decr_per_step}
    f(v_t) - f(\hat{v}_{t}) \geq \frac{1}{\gamma \sigma} \epsilon^2, 
\end{equation}
define $v_{t+1} = \hat{v}_t$, $L_{t+1} = 2^{\ell_t-1} L_t$, set $t := t + 1$ and go back to Step 2. Otherwise, set $t^+=t$, $v^+=v_t$ and STOP.
\\[0.15cm]
\noindent\textbf{Return} $v^+,t^+,\mathcal{F}^+$.
\end{algorithm}

Notice that \eqref{eq:decr_per_step} ensures that
\begin{equation*}
f(v) - f(v^{+}) \geq \frac{t^{+}}{\gamma \sigma} \epsilon^2.
\end{equation*}
Therefore, from now on we will refer to $t^+$ as the number of \textit{successful surrogate steps}. In order to prove finite termination of \Cref{algorithm3}, we will consider the following assumption. \\

\noindent \textbf{Assumption A1} $f(\,\cdot\,)$ is bounded from below by $f_\text{low} \in \R$.

\begin{lemma}
Suppose that A1 holds. Then \Cref{algorithm3} has finite termination. 
\end{lemma}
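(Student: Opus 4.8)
The plan is to monitor the value of the true objective at the outer iterates, $f(v_0), f(v_1), \dots$, and to show that condition \eqref{eq:decr_per_step} --- which is exactly what lets the outer loop proceed from $t$ to $t+1$ --- forces $f(v_t)$ to drop by a fixed positive amount at every step; since $f$ is bounded below by $f_\text{low}$ under A1, only finitely many such steps are possible. Before running this counting argument one should check that the inner construction in Step 2 is well posed, i.e. that a finite $\ell_t \ge 0$ satisfying the sufficient-decrease inequality always exists, since otherwise a single outer iteration could already fail to terminate.

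For the latter point: if $\nabla m_\theta(v_t) = 0$ then $\hat v_t = v_t$ and the inequality in Step 2 holds trivially with $\ell_t = 0$, after which \eqref{eq:decr_per_step} fails (its left-hand side is $0$) and the algorithm stops. If $\nabla m_\theta(v_t) \ne 0$, write $\alpha_\ell = 1/(2^{\ell} L_t)$; since $m_\theta$ is continuously differentiable, the first-order expansion $m_\theta(v_t) - m_\theta\big(v_t - \alpha_\ell \nabla m_\theta(v_t)\big) = \alpha_\ell \|\nabla m_\theta(v_t)\|_2^2 + o(\alpha_\ell)$ as $\ell \to \infty$ shows that, for $\rho < 1$, the required bound $m_\theta(v_t) - m_\theta(\hat v_t) \ge \rho\, \alpha_\ell \|\nabla m_\theta(v_t)\|_2^2$ holds for all sufficiently large $\ell$, so the backtracking in Step 2 terminates. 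Hence Step 2 is well defined at every $t$.

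It then remains to count outer iterations. Each time the algorithm does not stop at Step 4 it sets $v_{t+1} = \hat v_t$ and, by \eqref{eq:decr_per_step}, $f(v_t) - f(v_{t+1}) \ge \epsilon^2/(\gamma\sigma)$. Telescoping this over the first $T$ completed iterations and using $v_0 = v$ gives $f(v) - f(v_T) \ge T\,\epsilon^2/(\gamma\sigma)$, and Assumption A1 yields $f(v_T) \ge f_\text{low}$, whence $T \le \gamma\sigma\big(f(v) - f_\text{low}\big)/\epsilon^2$. Therefore \eqref{eq:decr_per_step} can hold only finitely many times and \Cref{algorithm3} stops, with $t^+ \le \gamma\sigma\big(f(v) - f_\text{low}\big)/\epsilon^2$. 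The only part of the argument that is not pure bookkeeping is the well-definedness of the Step 2 line search, which is why I would dispatch it first; termination of the outer loop is then an immediate consequence of a monotone decrease against a finite lower bound.
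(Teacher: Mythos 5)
Your termination argument is exactly the paper's: the paper phrases it as a contradiction (pick $t_*>\gamma\sigma(f(v)-f_{\text{low}})\epsilon^{-2}$ and derive $f(v_{t_*})<f_{\text{low}}$), while you telescope directly to bound $t^+$, but the content --- summing the per-step decrease \eqref{eq:decr_per_step} against the lower bound from A1 --- is identical. Your preliminary check that the backtracking in Step 2 terminates is a point the paper's proof silently skips; note that your argument for it genuinely needs $\rho<1$ (the inputs only require $\rho>0$, though the experiments use $\rho=10^{-4}$), so this is a worthwhile clarification rather than a defect in your proposal.
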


\begin{proof}
Suppose by contradiction that \Cref{algorithm3} never stops and consider an iteration index $t_{*}$ such that
\begin{equation*}
t_{*}> \max\left\{1,\gamma \sigma (f(v)-f_\text{low}) \epsilon^{-2}\right\}.
\end{equation*}
Then
\begin{equation*}
f(v_t) - f(v_{t+1}) \geq \frac{1}{\gamma \sigma} \epsilon^2 \; \text{for} \; t=0,1, \dots, t_*-1.
\end{equation*}
Summing up these inequalities, it follows that
\begin{equation*}
f(v) - f(v_{t_*}) \geq \frac{t_{*}}{\gamma \sigma}\epsilon^{2}> f(v)-f_\text{low}.
\end{equation*}
This implies that $f(v_{t_{*}}) < f_\text{low}$, contradicting A1.
\end{proof}

To illustrate the applicability of \Cref{algorithm3}, we will integrate it with a simplified version\footnote{The original version of the method includes a Hessian approximation $B_k$ which can be updated with a BFGS formula based on the finite-difference gradient approximation. \Cref{algorithm4} corresponds to the simple case with $B_k=0$ for all $k$.} 
of the DFO method proposed by \cite{grapiglia_worst-case_2023}, which will be referred as the base method. This method is a derivative-free variant of the Gradient Descent method with Armijo line search where $\nabla f(x_k)$ is approximated using forward finite differences. When $f$ is bounded from below and has Lipschitz continuous gradient, this method requires no more than $\mathcal{O}\left(n\epsilon^{-2}\right)$ function evaluations to find an $\epsilon$-approximate stationary point. The detailed steps are described below.
\begin{algorithm}[h!]
\caption{Base Method}
\label{algorithm4}
\textbf{Step 0.} Given $x_0 \in \R^n$, $\sigma_0 \geq \sigma_{\text{min}} > 0$, $\epsilon > 0$, set $k:=0$. 
\\[0.15cm]
\textbf{Step 1.} Set $i:=0$. \\[0.15cm]
\textbf{Step 1.1.} For 
\begin{equation*}
h_i = \frac{2\epsilon}{5 \sqrt{n} (2^i \sigma_k)}
\end{equation*}
compute the gradient approximation $g_{h_i}(x_k)$ defined by 
\begin{equation*}
[g_{h_i}(x_k)]_j = \frac{f(x_k+h_i e_j) - f(x_k)}{h_i}, \; j=1,\dots,n.
\end{equation*}

\textbf{Step 1.2.} If 
\begin{equation*}
\|g_{h_i}(x_k)\|_2 < \frac{4 \epsilon}{5},
\end{equation*}
set $i:=i+1$ and go back to Step 1.1. \\[0.2cm]
\textbf{Step 1.3.} If 
\begin{align*}
f(x_k) - f\left(x_k - \frac{1}{2^i \sigma_k} g_{h_i}(x_k)\right) \geq \frac{1}{8(2^i \sigma_k)} \|g_{h_i}(x_k)\|_2^2
\end{align*}
then set $i_k=i$ and 
\begin{equation*}
x_k^+ = x_k - \frac{1}{2^{i_k}\sigma_{k}} g_{h_{i_k}}(x_k)
\end{equation*}
Otherwise, set $i:=i+1$ and go back to step 1.1. \\[0.15cm]
\textbf{Step 2.} Set $x_{k+1} = x_k^+$, $\sigma_{k+1} = \max \{ 2^{i_k-1} \sigma_k, \sigma_{\text{min}} \}$, $k:=k+1$ and go back to Step 1.
\end{algorithm}

The surrogate-accelerated variant of this algorithm is presented in \Cref{algorithm5}. As it can be seen, incorporating \Cref{algorithm3} in \Cref{algorithm4} requires only small changes in the latter.

\begin{algorithm}[H]
\caption{Base Method with Surrogate Heuristic}
\label{algorithm5}
\textbf{Step 0.} Given $x_0 \in \R^n$, $\sigma_0 \geq \sigma_{\text{min}} > 0$, $\epsilon > 0$, and \fbox{$\rho, \lambda, \gamma > 0$}, set \small\fbox{$\mathcal{F} = \{(x_0,f(x_0))\},\,\, \mathcal{G} = \emptyset$}\normalsize, and $k:=0$.
\\[0.2cm]
\textbf{Step 1.} Set $i:=0$. 
\\[0.15cm]
\textbf{Step 1.1.} For
\begin{equation*}
h_i = \frac{2\epsilon}{5 \sqrt{n} (2^i \sigma_k)}
\end{equation*}
compute the gradient approximation $g_{h_i}(x_k)$ defined by 
\begin{equation*}
[g_{h_i}(x_k)]_j = \frac{f(x_k+h_i e_j) - f(x_k)}{h_i}, \; j=1,\dots,n. \end{equation*}
Set \fbox{$\mathcal{F} := \mathcal{F} \cup \{(x_k+h_i e_j,f(x_k+h_i e_j))\}_{j=1}^n$} 
\\[0.15cm]
\textbf{Step 1.2.} If 
\begin{equation*}
\|g_{h_i}(x_k)\|_2 < \frac{4 \epsilon}{5},
\end{equation*}
set $i:=i+1$ and go back to Step 1.1. 
\\[0.15cm]
\textbf{Step 1.3.} If 
\small
\begin{equation*}
f(x_k) - f\left(x_k - \frac{1}{2^i \sigma_k} g_{h_i}(x_k)\right) \geq \frac{1}{8(2^i \sigma_k)} \|g_{h_i}(x_k)\|_2^2 
\end{equation*}
\normalsize
then set $i_k=i$,
\begin{equation*}
x_k^+ = x_k - \frac{1}{2^{i_k}\sigma_{k}} g_{h_{i_k}}(x_k)
\end{equation*}
and update the datasets:
\begin{equation*}
\fbox{$\mathcal{F} := \mathcal{F} \cup \{(x_k^+, f(x_k^+))\},\quad \mathcal{G} := \mathcal{G} \cup \{(x_k,g_{h_{i_k}}(x_k))\}$.}
\end{equation*}
Otherwise, set $i=i+1$ and go back to step 1.1. 
\\[0.15cm]
\textbf{Step 2.} Attempt performing surrogate steps\\[0.2cm]\small
\fbox{$(v_k^+,t_k,\mathcal{F}^+) =\texttt{ Surrogate}\left(x_k^+, f, \mathcal{F}, \mathcal{G}, 2^{i_k} \sigma_k, \rho, \lambda, \gamma, \epsilon\right)$}
\normalsize
\\[0.3cm]
\textbf{Step 3.} Set \fbox{$x_{k+1} = v_k^+$, $\mathcal{F}:=\mathcal{F}\cup\mathcal{F}^{+}$}, 
$$\sigma_{k+1} = \max \{ 2^{i_k-1} \sigma_k, \sigma_{\text{min}} \},$$ 
$k:=k+1$, and go back to Step 1.
\end{algorithm}

To analyse the worst-case oracle complexity of \Cref{algorithm5}, we will consider the following additional assumption. \\

\noindent \textbf{Assumption A2} $\nabla f(\,\cdot\,)$ is $L$-Lipschitz continuous, that is
\begin{equation*}
\|\nabla f(x)-\nabla f(y)\|_2 \leq L \|x-y\|_2, \; \forall x,y \in \R^n.
\end{equation*}

\begin{lemma} \label{lem:2}
Suppose that A2 holds. Then the $k$-th iteration of \Cref{algorithm5} is well defined whenever $\|\nabla f(x_k)\|_2 > \epsilon$. In addition, if 
\begin{equation*}
\|\nabla f(x_k)\|_2 > \epsilon, \; \text{for} \; k = 0,1, \dots, T-1,
\end{equation*}
for $T \geq 1$, then
\begin{equation} \label{eq:1}
\sigma_k \leq 2 \max \{\sigma_0, 2L\}=\sigma_{\max}, \; \text{for} \; k=0,1, \dots, T
\end{equation}
and
\begin{equation} \label{eq:3}
f(x_k) - f(x_k^+) \geq \frac{1}{2 C_f} \|\nabla f(x_k)\|_2^2, \; \text{for} \; k=0,1, \dots, T-1,
\end{equation}
where
\begin{equation*}
C_f=\frac{81}{8} \max \Big{\{}\sigma_{\max}, \frac{L^2}{\sigma_{\min}} \Big{\}}.
\end{equation*}
\end{lemma}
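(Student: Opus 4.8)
The plan is to reduce the statement to the worst-case analysis of the base method. Indeed, \Cref{algorithm5} computes $\sigma_k$, $i_k$ and $x_k^+$ by exactly the same rules as \Cref{algorithm4}; the surrogate call in Step~2 influences only $x_{k+1}=v_k^+$, not the quantities appearing in \eqref{eq:1} and \eqref{eq:3}, and it terminates in finitely many steps by the finite-termination property of \Cref{algorithm3} proved above (under \textbf{A1}, a standing assumption of the paper). So it suffices to analyze the inner loop of Steps~1.1--1.3, which is the $B_k\equiv 0$ instance of the method of \cite{grapiglia_worst-case_2023}. The one estimate everything rests on is the forward-finite-difference bound: by \textbf{A2} and a first-order Taylor expansion, $\bigl|[g_{h_i}(x_k)]_j-[\nabla f(x_k)]_j\bigr|\le \tfrac{L}{2}h_i$ for each $j$, hence
\[
\|g_{h_i}(x_k)-\nabla f(x_k)\|_2\le \frac{\sqrt{n}\,L}{2}\,h_i=\frac{L\epsilon}{5\cdot 2^{i}\sigma_k}.
\]

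Next I would establish well-definedness of the inner loop under $\|\nabla f(x_k)\|_2>\epsilon$, together with \eqref{eq:1}. For any $i$ with $2^{i}\sigma_k\ge L$ the bound above is at most $\epsilon/5$, so $\|g_{h_i}(x_k)\|_2\ge\|\nabla f(x_k)\|_2-\epsilon/5>\tfrac45\epsilon$ and the test of Step~1.2 is passed; and, writing $s=2^i\sigma_k$, the descent lemma (a consequence of \textbf{A2}) gives $f(x_k)-f(x_k-\tfrac1s g_{h_i}(x_k))\ge\tfrac1s\langle\nabla f(x_k),g_{h_i}(x_k)\rangle-\tfrac{L}{2s^2}\|g_{h_i}(x_k)\|_2^2$, and combining $\langle\nabla f(x_k),g_{h_i}(x_k)\rangle\ge\|g_{h_i}(x_k)\|_2^2-\|g_{h_i}(x_k)-\nabla f(x_k)\|_2\,\|g_{h_i}(x_k)\|_2$ with the error estimate and $\epsilon\le\tfrac54\|g_{h_i}(x_k)\|_2$ shows the sufficient-decrease test of Step~1.3 holds as soon as $s\ge L$. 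Hence the loop stops at some finite $i_k$, and a short case check ($i_k=0$ if $\sigma_k\ge L$, and $2^{i_k}\sigma_k<2L$ otherwise) gives $2^{i_k-1}\sigma_k\le\max\{\sigma_k/2,L\}$; feeding this into $\sigma_{k+1}=\max\{2^{i_k-1}\sigma_k,\sigma_{\min}\}$ and using $\sigma_0\ge\sigma_{\min}$, an induction over $k=0,\dots,T$ (legitimate because $\|\nabla f(x_k)\|_2>\epsilon$ for $k<T$ makes each inner loop terminate) yields $\sigma_k\le\max\{\sigma_0,2L\}\le\sigma_{\max}$, i.e.\ \eqref{eq:1}. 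This, together with the finite termination of Step~2, gives well-definedness of iteration $k$.

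For \eqref{eq:3}, I would start from the inequality accepted in Step~1.3, $f(x_k)-f(x_k^+)\ge\frac{1}{8\cdot 2^{i_k}\sigma_k}\|g_{h_{i_k}}(x_k)\|_2^2$, and bound the right-hand side below by splitting on the size of $s:=2^{i_k}\sigma_k\in[\sigma_{\min},\sigma_{\max}]$. If $s\ge L\epsilon/\|\nabla f(x_k)\|_2$, the error estimate yields $\|g_{h_{i_k}}(x_k)-\nabla f(x_k)\|_2\le\tfrac15\|\nabla f(x_k)\|_2$, hence $\|g_{h_{i_k}}(x_k)\|_2\ge\tfrac45\|\nabla f(x_k)\|_2$ and $f(x_k)-f(x_k^+)\ge\tfrac{2}{25\sigma_{\max}}\|\nabla f(x_k)\|_2^2$. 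If instead $s< L\epsilon/\|\nabla f(x_k)\|_2$, then $1/s>\|\nabla f(x_k)\|_2/(L\epsilon)$, and using $\|g_{h_{i_k}}(x_k)\|_2\ge\tfrac45\epsilon$ from Step~1.2 together with $s\ge\sigma_{\min}$ (which forces $\epsilon>\sigma_{\min}\|\nabla f(x_k)\|_2/L$) gives $f(x_k)-f(x_k^+)\ge\tfrac{2\sigma_{\min}}{25 L^2}\|\nabla f(x_k)\|_2^2$. In either case $f(x_k)-f(x_k^+)\ge\tfrac{2}{25}\max\{\sigma_{\max},L^2/\sigma_{\min}\}^{-1}\|\nabla f(x_k)\|_2^2\ge\frac{1}{2C_f}\|\nabla f(x_k)\|_2^2$, since $\tfrac{2}{25}\ge\tfrac{4}{81}$.

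I expect the last step to be the main obstacle: when the accepted stepsize $1/(2^{i_k}\sigma_k)$ is large, the finite-difference gradient can approximate $\nabla f(x_k)$ poorly in relative terms, so one cannot compare $\|g_{h_{i_k}}(x_k)\|_2$ with $\|\nabla f(x_k)\|_2$ directly and must instead exploit the Step~1.2 acceptance test and the floor $\sigma_k\ge\sigma_{\min}$ — this is exactly where the $L^2/\sigma_{\min}$ term in $C_f$ arises. A minor point to watch is that the induction for \eqref{eq:1} should be run only over indices $k$ for which the hypothesis $\|\nabla f(x_k)\|_2>\epsilon$ guarantees finite termination of the inner loop, and that the role of the surrogate heuristic is confined to not spoiling any of this, which is immediate since its output never feeds back into $\sigma_k$, $i_k$ or $x_k^+$.
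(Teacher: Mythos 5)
Your proposal is correct, but it takes a genuinely different route from the paper: the paper's entire proof of this lemma is a citation ("It follows directly from Lemmas 1--4 in \cite{grapiglia_worst-case_2023}"), whereas you reconstruct that analysis from scratch. Your reconstruction is sound: the forward-difference error bound $\|g_{h_i}(x_k)-\nabla f(x_k)\|_2\le \tfrac{\sqrt{n}L}{2}h_i=\tfrac{L\epsilon}{5\cdot 2^i\sigma_k}$ is the right engine; the argument that both inner tests pass once $2^i\sigma_k\ge L$ (using $\|g\|_2\ge\tfrac45\epsilon$ so that the gradient error is at most $\tfrac14\|g\|_2$, giving a decrease of at least $\tfrac{1}{4s}\|g\|_2^2\ge\tfrac{1}{8s}\|g\|_2^2$) correctly yields termination of the inner loop, the bound $2^{i_k-1}\sigma_k\le\max\{\sigma_k/2,L\}$, and hence \eqref{eq:1} by induction; and your two-case analysis for \eqref{eq:3} (comparing $s=2^{i_k}\sigma_k$ with $L\epsilon/\|\nabla f(x_k)\|_2$, and in the second case combining $\|g\|_2\ge\tfrac45\epsilon$ with $s\ge\sigma_{\min}$ to convert $\epsilon\|\nabla f(x_k)\|_2$ into $\|\nabla f(x_k)\|_2^2$) delivers the constant $\tfrac{2}{25}\max\{\sigma_{\max},L^2/\sigma_{\min}\}^{-1}\ge\tfrac{1}{2C_f}$ since $\tfrac{2}{25}\ge\tfrac{4}{81}$. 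What your approach buys is a self-contained verification, including an explanation of where the $L^2/\sigma_{\min}$ term in $C_f$ comes from; what the paper's approach buys is brevity and exact alignment with the constants of the cited reference. One small point you rightly flag: well-definedness of the \emph{full} $k$-th iteration of \Cref{algorithm5} (including the surrogate call in Step~2) additionally needs A1 for finite termination of \Cref{algorithm3}, which the lemma statement does not list among its hypotheses; your reading of A1 as a standing assumption is the natural repair.
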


\begin{proof}
It follows directly from Lemmas 1-4 in \cite{grapiglia_worst-case_2023}.
\end{proof}

Let
\begin{equation*}
T(\epsilon) = \inf \{ k \in \mathbb{N} \; | \; \|\nabla f(x_k)\|_2 \leq \epsilon\}
\end{equation*}
and suppose that $T(\epsilon) \geq 1$. Given $T$, with $0 < T \leq T(\epsilon)$, we will denote by $S(T)$ the average number of successful surrogate steps over the first $T$ iterations of \Cref{algorithm5}, i.e.,
\begin{equation*}
S(T) = \frac{1}{T} \sum_{k=0}^{T-1} t_k.
\end{equation*}

\begin{lemma} \label{lem:3}
Suppose that A1-A2 hold and let $\{x_k\}_{k=0}^{T(\epsilon)}$ be generated by \Cref{algorithm5}. Then
\begin{equation} \label{eq:4}
T(\epsilon) \leq \frac{2 C_{\max} (f(x_0)-f_{\text{low}})}{1 + S(T(\epsilon))} \epsilon^{-2},
\end{equation}
where
\begin{equation*}
C_{\max} = \max \{ C_f, \gamma \sigma_{\max} \}
\end{equation*}
with $C_f$ and $\sigma_{\max}$ defined in \Cref{lem:2}.
\end{lemma}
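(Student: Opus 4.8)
The plan is to show that at every iteration $k<T(\epsilon)$ the objective decreases by at least $\tfrac{1+t_k}{2C_{\max}}\epsilon^2$ — with a contribution of order $\tfrac{1}{2C_{\max}}\epsilon^2$ coming from the base step $x_k\mapsto x_k^+$ and a contribution of order $\tfrac{t_k}{2C_{\max}}\epsilon^2$ coming from the $t_k$ successful surrogate steps $x_k^+\mapsto x_{k+1}=v_k^+$ — and then to telescope over $k$.

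First I would set up the base-step contribution. By definition of $T(\epsilon)$ we have $\|\nabla f(x_k)\|_2>\epsilon$ for $k=0,\dots,T(\epsilon)-1$, so \Cref{lem:2} applies with $T=T(\epsilon)$: each of these iterations is well defined, $\sigma_k\le\sigma_{\max}$ for $k=0,\dots,T(\epsilon)$, and the decrease \eqref{eq:3} holds. Since $C_{\max}\ge C_f$ and $\|\nabla f(x_k)\|_2>\epsilon$, \eqref{eq:3} yields $f(x_k)-f(x_k^+)\ge\tfrac{1}{2C_f}\|\nabla f(x_k)\|_2^2\ge\tfrac{1}{2C_{\max}}\epsilon^2$.

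Next I would handle the surrogate contribution. In \Cref{algorithm5} the point $x_{k+1}=v_k^+$ is returned by the call \texttt{Surrogate}$(x_k^+,f,\mathcal{F},\mathcal{G},2^{i_k}\sigma_k,\rho,\lambda,\gamma,\epsilon)$, which terminates after finitely many steps thanks to A1 (finite-termination lemma for \Cref{algorithm3}). The observation following \Cref{algorithm3} then gives $f(x_k^+)-f(x_{k+1})\ge\tfrac{t_k}{\gamma\,2^{i_k}\sigma_k}\epsilon^2$. The update rule $\sigma_{k+1}=\max\{2^{i_k-1}\sigma_k,\sigma_{\min}\}\ge 2^{i_k-1}\sigma_k$ gives $2^{i_k}\sigma_k\le 2\sigma_{k+1}$, and combining this with \eqref{eq:1} (valid since $k+1\le T(\epsilon)$) yields $2^{i_k}\sigma_k\le 2\sigma_{\max}$; as $C_{\max}\ge\gamma\sigma_{\max}$, this turns the previous inequality into $f(x_k^+)-f(x_{k+1})\ge\tfrac{t_k}{2C_{\max}}\epsilon^2$.

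Adding the two bounds gives $f(x_k)-f(x_{k+1})\ge\tfrac{1+t_k}{2C_{\max}}\epsilon^2$ for $k=0,\dots,T(\epsilon)-1$. Summing over $k$, using $\sum_{k=0}^{T(\epsilon)-1}t_k=T(\epsilon)\,S(T(\epsilon))$ and $f(x_{T(\epsilon)})\ge f_{\text{low}}$ from A1, produces $f(x_0)-f_{\text{low}}\ge\tfrac{\epsilon^2}{2C_{\max}}\,T(\epsilon)\bigl(1+S(T(\epsilon))\bigr)$, which rearranges to \eqref{eq:4}. (If one does not wish to assume $T(\epsilon)<\infty$ in advance, running the same summation for an arbitrary finite $T\le T(\epsilon)$ first gives $T\le 2C_{\max}(f(x_0)-f_{\text{low}})\epsilon^{-2}$, hence finiteness, and then one takes $T=T(\epsilon)$.) The whole argument is a telescoping sum, so I do not expect a serious obstacle; the only point requiring care is matching the scale of the surrogate decrease to that of the base-step decrease, i.e.\ bounding the stepsize parameter $2^{i_k}\sigma_k$ passed to \texttt{Surrogate} by $2\sigma_{\max}$ — this, together with \eqref{eq:3}, is exactly what forces the value $C_{\max}=\max\{C_f,\gamma\sigma_{\max}\}$ in the statement.
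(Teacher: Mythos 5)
Your proposal is correct and follows essentially the same route as the paper: decompose $f(x_k)-f(x_{k+1})$ into the base-step decrease bounded via \eqref{eq:3} and the surrogate decrease bounded via the observation after \Cref{algorithm3} together with $2^{i_k}\sigma_k\le 2\sigma_{\max}$ from \eqref{eq:1}, then telescope. Your explicit justification of the bound $2^{i_k}\sigma_k\le 2\sigma_{\max}$ through the update rule for $\sigma_{k+1}$ is a slightly more detailed rendering of what the paper does implicitly, but the argument is the same.
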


\begin{proof}
Given $k \in \{ 0, 1, \dots, T(\epsilon)-1 \}$, it follows from \Cref{algorithm3} and inequality \eqref{eq:1} in \Cref{lem:2} that
\begin{equation} \label{eq:2}
f(x_k^+) - f(v_k^+) \geq \frac{t_k}{\gamma (2^{i_k} \sigma_k)} \epsilon^2 \geq \frac{t_k}{2 \gamma \sigma_{\max}} \epsilon^2.
\end{equation}
Thus, combining \eqref{eq:2} with inequality \eqref{eq:3} in \Cref{lem:2}, we obtain
\begin{align*}
f(x_k) - f(x_{k+1}) &= f(x_k) - f(v_k^+) \\
&= f(x_k) - f(x_k^+) + f(x_k^+) - f(v_k^+) \\
&\geq \frac{1}{2 C_f} \|\nabla f(x_k)\|_2^2 + \frac{t_k}{2 \gamma \sigma_{\max}} \epsilon^2 \\
&\geq (1+t_k) \frac{\epsilon^2}{2 C_{\max}}.
\end{align*}
Summing up these inequalities for $k=0,1, \dots, T(\epsilon)-1$ and using A1, it follows that
\begin{align*}
f(x_0) - f_{\text{low}} &\geq f(x_0) - f(x_{T(\epsilon)}) \\
&= \sum_{k=0}^{T(\epsilon)-1} (f(x_k) - f(x_{k+1})) \\
&\geq \Big{(}T(\epsilon) + \sum_{k=0}^{T(\epsilon)-1} t_k\Big{)} \frac{\epsilon^2}{2 C_{\max}} \\
&= (1 + S(T(\epsilon))) T(\epsilon) \frac{\epsilon^2}{2 C_{\max}},
\end{align*}
which implies that \eqref{eq:4} is true.
\end{proof}
Since $S(T(\epsilon))\geq 0$, it follows from \Cref{lem:3} that \Cref{algorithm5} takes at most $\mathcal{O}\left(\epsilon^{-2}\right)$ outer iterations to find an $\epsilon$-approximate stationary point of $f$. The next results gives us a worst-case complexity bound for the number of function evaluations required by \Cref{algorithm5} to achieve this goal.

\begin{theorem} \label{thm:1}
Suppose that A1-A2 hold. Let $\{x_k\}_{k=0}^{T(\epsilon)}$ be generated by \Cref{algorithm5} and denote by $\text{FE}(\epsilon)$ the corresponding number of evaluations of $f(\,\cdot\,)$ that were required. Then
\begin{align}
\text{FE}(\epsilon) &\leq 4 \eta(S(T(\epsilon)) (n+1) C_{\max} (f(x_0) - f_\text{low}) \epsilon^{-2} + \log_2\Big{(}\frac{\sigma_{\max}}{\sigma_0}\Big{)}(n+1) + T(\epsilon), \label{eq:5}
\end{align}
where
\begin{equation*}
\eta(S(T(\epsilon)) = \frac{1 + \frac{S(T(\epsilon))}{2(n+1)}}{1 + S(T(\epsilon))}
\end{equation*}
and $\sigma_{\max}$, $C_{\max}$ are defined in \Cref{lem:2} and \Cref{lem:3}.
\end{theorem}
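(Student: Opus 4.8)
The plan is to derive \eqref{eq:5} by an explicit, iteration-by-iteration count of the evaluations of $f(\,\cdot\,)$, followed by two substitutions: a telescoping estimate for the cumulative number of inner iterations, which comes from the stepsize update in Step~3 of \Cref{algorithm5}, and the outer-iteration bound \eqref{eq:4} of \Cref{lem:3}. Throughout, \Cref{lem:2} guarantees that iterations $k=0,\dots,T(\epsilon)-1$ are well defined, since $\|\nabla f(x_k)\|_2>\epsilon$ for every such $k$ by the definition of $T(\epsilon)$; and the earlier finite-termination lemma for \Cref{algorithm3} guarantees that each call to \texttt{Surrogate} performs only finitely many evaluations, so that $\text{FE}(\epsilon)<\infty$.

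First I would count the evaluations spent in a single outer iteration $k\in\{0,\dots,T(\epsilon)-1\}$. In the inner loop (Steps~1.1--1.3), the index $i$ runs over $0,1,\dots,i_k$; for each such $i$, Step~1.1 costs exactly $n$ evaluations (the value $f(x_k)$ is inherited from the previous iteration, or from Step~0 when $k=0$, and is reused for all $i$ and all coordinates), while Step~1.3, when it is reached, costs one more, so iteration $k$ spends at most $(n+1)(i_k+1)$ evaluations on finite differences. Inside the call $\texttt{Surrogate}(x_k^+,f,\mathcal{F},\mathcal{G},2^{i_k}\sigma_k,\rho,\lambda,\gamma,\epsilon)$, Step~3 of \Cref{algorithm3} performs exactly one evaluation of $f$ for each value $t=0,1,\dots,t_k$ of the internal counter (Step~2 only queries the surrogate $m_\theta$), hence exactly $t_k+1$ evaluations, and the points stored in $\mathcal{F}^+$ are precisely these $\hat v_t$. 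Adding the single evaluation $f(x_0)$ made at Step~0, we obtain
\begin{equation*}
\text{FE}(\epsilon)\;\leq\;1+(n+1)\sum_{k=0}^{T(\epsilon)-1}(i_k+1)+\sum_{k=0}^{T(\epsilon)-1}(t_k+1),
\end{equation*}
and the last sum equals $T(\epsilon)+\sum_{k=0}^{T(\epsilon)-1}t_k=\bigl(1+S(T(\epsilon))\bigr)T(\epsilon)$ by the definition of $S(\,\cdot\,)$.

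Next I would bound $\sum_k(i_k+1)$ by the standard telescoping argument on the stepsize parameter. From $\sigma_{k+1}=\max\{2^{i_k-1}\sigma_k,\sigma_{\text{min}}\}\geq 2^{i_k-1}\sigma_k$ it follows that $i_k-1\leq\log_2\sigma_{k+1}-\log_2\sigma_k$, hence $i_k+1\leq 2+\log_2\sigma_{k+1}-\log_2\sigma_k$; summing over $k=0,\dots,T(\epsilon)-1$, telescoping, and using $\sigma_{T(\epsilon)}\leq\sigma_{\max}$ from \eqref{eq:1} gives
\begin{equation*}
\sum_{k=0}^{T(\epsilon)-1}(i_k+1)\;\leq\;2\,T(\epsilon)+\log_2\!\Bigl(\tfrac{\sigma_{\max}}{\sigma_0}\Bigr).
\end{equation*}
Plugging this and the surrogate sum into the previous display yields
\begin{equation*}
\text{FE}(\epsilon)\;\leq\;\bigl(2(n+1)+1+S(T(\epsilon))\bigr)T(\epsilon)+(n+1)\log_2\!\Bigl(\tfrac{\sigma_{\max}}{\sigma_0}\Bigr)+1.
\end{equation*}

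The last step is the algebraic manipulation that produces the constant $\eta(\,\cdot\,)$. I would split $\bigl(2(n+1)+1+S(T(\epsilon))\bigr)T(\epsilon)=\bigl(2(n+1)+S(T(\epsilon))\bigr)T(\epsilon)+T(\epsilon)$, keep the second term as the additive $T(\epsilon)$ appearing in \eqref{eq:5}, and bound the first term with \eqref{eq:4} written as $\bigl(1+S(T(\epsilon))\bigr)T(\epsilon)\leq 2C_{\max}(f(x_0)-f_{\text{low}})\epsilon^{-2}$, obtaining
\begin{equation*}
\bigl(2(n+1)+S(T(\epsilon))\bigr)T(\epsilon)\;\leq\;\frac{2(n+1)+S(T(\epsilon))}{1+S(T(\epsilon))}\,2C_{\max}(f(x_0)-f_{\text{low}})\epsilon^{-2}\;=\;4\,\eta(S(T(\epsilon)))\,(n+1)\,C_{\max}(f(x_0)-f_{\text{low}})\epsilon^{-2},
\end{equation*}
the last equality being merely the definition of $\eta$. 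Collecting terms gives \eqref{eq:5}, the harmless additive $1$ from $f(x_0)$ being absorbed (or simply ignored, if the initial datum is regarded as given). I do not expect a genuine obstacle here: the theorem is essentially a careful accounting plus a standard telescoping estimate plus \Cref{lem:3}, and the two points that need care are the bookkeeping --- making sure each $f$-value is counted once, in particular that $f(x_k)$ is not recomputed inside the inner loop and that $\mathcal{F}^+$ contains only points already charged --- and the decision to peel off exactly one copy of $T(\epsilon)$ before invoking \Cref{lem:3}, which is what collapses the bound to the stated form with $\eta(S(T(\epsilon)))$ instead of a looser multiple.
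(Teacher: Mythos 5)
Your proposal is correct and follows essentially the same route as the paper's proof: the same per-iteration count $\text{FE}(\epsilon)\leq\sum_k (i_k+1)(n+1)+(t_k+1)$, the same telescoping bound $i_k+1\leq 2+\log_2\sigma_{k+1}-\log_2\sigma_k$, and the same final substitution of the bound $(1+S(T(\epsilon)))T(\epsilon)\leq 2C_{\max}(f(x_0)-f_{\text{low}})\epsilon^{-2}$ from \Cref{lem:3}, differing only in cosmetic algebra and in your (harmless, and arguably more careful) explicit accounting of the single evaluation $f(x_0)$.
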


\begin{proof}
Notice that
\begin{equation}
\text{FE}(\epsilon) \leq \sum_{k=0}^{T(\epsilon)-1} (i_k+1) (n+1) + (t_k+1).
\label{eq:tmg1}
\end{equation}
By the update rule for $\sigma_k$, we have
\begin{equation*}
2^{i_k-1} \sigma_k \leq \max \{ 2^{i_k-1} \sigma_k, \sigma_{\min} \} = \sigma_{k+1}
\end{equation*}
and so
\begin{equation}
i_k + 1 \leq  2 + \log_2 (\sigma_{k+1}) - \log_2 (\sigma_k). \label{eq:tmg2}
\end{equation}
Now combining (\ref{eq:tmg1}) and (\ref{eq:tmg2}), and using \Cref{lem:2} and \Cref{lem:3}, it follows that
\small
\begin{align*}
&\text{FE}(\epsilon) \leq \sum_{k=0}^{T(\epsilon)-1} \left[2 + \log_2\left(\frac{\sigma_{k+1}}{\sigma_k}\right)\right] (n+1) + \sum_{k=0}^{T(\epsilon)-1} (t_k + 1) \\
&= (n+1) \left[2 T(\epsilon) + \log\Big{(}\frac{\sigma_{T(\epsilon)}}{\sigma_0}\Big{)}\right] + T(\epsilon) S(T(\epsilon)) + T(\epsilon) \\
&\leq  (n+1) \left[2\Big{(}1+\frac{S(T(\epsilon))}{2(n+1)}\Big{)} T(\epsilon) + \log_2\Big{(}\frac{\sigma_{\max}}{\sigma_0}\Big{)}\right] + T(\epsilon) \\
&\leq 4 \eta (S(T(\epsilon))) (n+1) C_{\max} (f(x_0) - f_\text{low}) \epsilon^{-2} + \log_2\Big{(}\frac{\sigma_{\max}}{\sigma_0}\Big{)} (n+1) + T(\epsilon).
\end{align*}
\end{proof}
\normalsize
Notice that, $\eta:\mathbb{R}_{+}\to\mathbb{R}$ defined by
\begin{equation*}
\eta(S) = \frac{1 + \frac{S}{2(n+1)}}{1 + S}
\end{equation*}
is a nonnegative decreasing function with $\eta(0) = 1$. Therefore, even when our surrogate heuristic do not produce any successful step (i.e., $S(T(\epsilon))=0$), by \Cref{thm:1}, we have $\text{FE}(\epsilon) = O(n \epsilon^{-2})$, which matches the worst-case complexity bound already established for \Cref{algorithm3}. On the other hand, when $S(T(\epsilon)) > 0$, we have $\eta(S(T(\epsilon))) < 1$, which gives an improvement in the oracle complexity bound \eqref{eq:5}. For this reason, in what follows we will refer to $\eta(S(T(\epsilon)))$ as the \textit{surrogate gain}. In particular, if \fbox{$S(T(\epsilon)) \geq n$}, we have
\begin{equation*}
\eta(S(T(\epsilon))) \leq \dfrac{3}{2(n+1)},
\end{equation*}
which, combined with \eqref{eq:5}, yields
\begin{equation*}
\text{FE}(\epsilon) \leq 6 C_{\max} (f(x_0)-f_\text{low}) \epsilon^2 + \log_2 \Big{(}\frac{\sigma_{\max}}{\sigma_0}\Big{)} (n+1) + T(\epsilon).
\end{equation*}
Therefore, in this ideal case, the dependence of the complexity bound on the problem dimension would be just due to the term $\log_2(\frac{\sigma_{\max}}{\sigma_0}) (n+1)$, which comes from the lack of knowledge of the Lipschitz constant $L$.

\section{Surrogate models of the objective}
\label{sec:models}

The surrogate model $m_{\theta}$ used in \Cref{algorithm3} and \Cref{algorithm5} is trained by solving \eqref{eq:gg2}; this form of model training, that penalizes errors on the model derivatives, is known in the machine learning community as Sobolev learning \cite{czarnecki_sobolev_2017}. We show next that, under some specific assumptions, Sobolev learning can be interpreted as a regularization on the curvature of the surrogate model.

Note that the approximate gradients $g_{h_1}(z_1), \dots, g_{h_M}(z_M)$ were obtained in \Cref{algorithm5} using finite differences:  
\begin{equation} \label{eq:fin_diff}
    [g_{h}(z)]_l = \frac{f(z+h e_l) - f(z)}{h}, \quad \text{for} \; l = 1, \dots, n.
\end{equation} 
This expression requires evaluating $f$ at the $n+1$ points $z_j, z_j+h_j e_1, \dots, z_j+h_j e_n$, so that
\begin{align*}
\{(z_j,f(z_j)), (z_j+h_j e_1,f(z_j+h_j e_1)),\dots, (z_j+h_j e_n,f(z_j+h_j e_n))\}_{ j = 1, \dots, M} \subset \mathcal{F}.
\end{align*}
The following proposition shows that, assuming exact interpolation of the model for all these points, Sobolev learning with forward finite-difference gradients induces a regularization on the curvature of the model.

\begin{proposition} \label{prop1}
Let $m_{\theta} : \R^n \to \R$ be twice continuously differentiable, and $z \in \R^n$ be such that
\begin{align}
m_{\theta}(z) &= f(z) \label{eq:interp1} \\
m_{\theta}(z+h e_l) &= f(z+h e_l), \quad \text{for} \; l = 1, \dots, n. \label{eq:interp2}
\end{align}
Then, there exists $\zeta_1, \dots, \zeta_n \in [0,1]$ such that
\begin{equation*}
\|g_h(z) - \nabla m_{\theta}(z)\|_2^2 = \frac{h^2}{4} \sum_{l=1}^n [\nabla^2 m_{\theta}(z+h \zeta_l e_l)]_{ll}^2,
\end{equation*}
where $g_h(z)$ is defined by \eqref{eq:fin_diff}.
\end{proposition}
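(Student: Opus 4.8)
The plan is to reduce the statement to a one‑dimensional Taylor expansion along each coordinate direction. First I would use the interpolation hypotheses \eqref{eq:interp1}--\eqref{eq:interp2} to rewrite the finite‑difference gradient \eqref{eq:fin_diff} entirely in terms of $m_\theta$: for each $l \in \{1,\dots,n\}$,
\begin{equation*}
[g_h(z)]_l = \frac{f(z+h e_l) - f(z)}{h} = \frac{m_\theta(z+h e_l) - m_\theta(z)}{h}.
\end{equation*}
This is the only place the hypotheses on $f$ are used; after this step $f$ disappears from the argument.

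Next, for fixed $l$ I would introduce the auxiliary function $\phi_l : \R \to \R$ defined by $\phi_l(t) = m_\theta(z + t e_l)$. Since $m_\theta$ is twice continuously differentiable, so is $\phi_l$, with $\phi_l'(t) = [\nabla m_\theta(z+t e_l)]_l$ and $\phi_l''(t) = [\nabla^2 m_\theta(z+t e_l)]_{ll}$. Applying Taylor's theorem with the Lagrange form of the remainder on $[0,h]$ gives, for some $\xi_l$ strictly between $0$ and $h$,
\begin{equation*}
\phi_l(h) = \phi_l(0) + h\,\phi_l'(0) + \frac{h^2}{2}\,\phi_l''(\xi_l).
\end{equation*}
Writing $\xi_l = h\zeta_l$ with $\zeta_l \in (0,1) \subset [0,1]$ and translating back to $m_\theta$, this reads $m_\theta(z+h e_l) = m_\theta(z) + h\,[\nabla m_\theta(z)]_l + \tfrac{h^2}{2}[\nabla^2 m_\theta(z + h\zeta_l e_l)]_{ll}$. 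Combining with the expression for $[g_h(z)]_l$ above yields the componentwise identity
\begin{equation*}
[g_h(z)]_l - [\nabla m_\theta(z)]_l = \frac{h}{2}\,[\nabla^2 m_\theta(z + h\zeta_l e_l)]_{ll}.
\end{equation*}

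Finally I would square each component and sum over $l = 1,\dots,n$ to obtain
\begin{equation*}
\|g_h(z) - \nabla m_\theta(z)\|_2^2 = \sum_{l=1}^n \left(\frac{h}{2}[\nabla^2 m_\theta(z + h\zeta_l e_l)]_{ll}\right)^2 = \frac{h^2}{4}\sum_{l=1}^n [\nabla^2 m_\theta(z + h\zeta_l e_l)]_{ll}^2,
\end{equation*}
which is the claim. There is no serious obstacle here: the argument is a routine coordinatewise Taylor expansion, and the only points requiring a little care are (i) invoking the interpolation conditions at exactly the $n+1$ points $z, z+he_1,\dots,z+he_n$ to eliminate $f$, and (ii) keeping the mean‑value parameters $\zeta_l$ direction‑dependent (one $\zeta_l$ per coordinate, not a single global one), which is why the statement quantifies over $\zeta_1,\dots,\zeta_n$ separately.
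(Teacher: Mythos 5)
Your proposal is correct and follows essentially the same route as the paper's proof: eliminate $f$ via the interpolation conditions, apply a second-order Taylor expansion with Lagrange remainder along each coordinate direction to get the componentwise identity $[g_h(z) - \nabla m_\theta(z)]_l = \tfrac{h}{2}[\nabla^2 m_\theta(z+h\zeta_l e_l)]_{ll}$, then square and sum. The only difference is that you spell out the one-dimensional auxiliary function and the direction-dependence of the $\zeta_l$ explicitly, which the paper leaves implicit.
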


\begin{proof}
By the interpolation conditions \eqref{eq:interp1} and \eqref{eq:interp2}, for all $l=1,\dots,n$,
\[[g_h(z)]_l = \frac{f(z+h e_l) - f(z)}{h} = \frac{m_{\theta}(z+h e_l) - m_{\theta}(z)}{h}.\]
By the second-order Taylor expansion of $m_\theta(\,\cdot\,)$ with remainder, there exists $\zeta_l\in [0,1]$ such that
\[m_{\theta}(z+h e_l) = m_{\theta}(z) + h [\nabla m_{\theta}(z)]_l + \frac{h^2}{2} [\nabla^2 m_{\theta}(z+h\zeta_l e_l)]_{ll}, \]
so that 
\[\frac{m_{\theta}(z+he_l)-m_{\theta}(z)}{h} = [\nabla m_{\theta}(z)]_l + \frac{h}{2} [\nabla^2 m_{\theta}(z+h\zeta_l e_l)]_{ll}. \]
There follows that
\[[g_h(z) - \nabla m_{\theta}(z)]_l = \frac{h}{2} [\nabla^2 m_{\theta}(z+h\zeta_l e_l)]_{ll}.\]
\end{proof}

This result implies that, when finite difference gradients are used and all points in $\mathcal{F}$ are exactly interpolated, Sobolev learning can be interpreted as a penalty on a term related to the curvature of the model. 

In this work, we consider two families of models: shallow neural networks (NNs) and radial basis functions (RBFs).

\paragraph*{Shallow Neural Networks.} 
The model $m_{\theta}$ is here chosen as a one-hidden-layer neural network: 
\begin{equation*} \label{eq:nnModel}
m_\theta(x) =  W_2 \phi (W_1 x + b_1) + b_2, 
\end{equation*}
where $\phi : \R^q \rightarrow \R^q$ is an (entrywise) activation function, $W_1 \in \R^{q \times n}, W_2 \in \R^{1 \times q}$ are weight matrices and $b_1 \in \R^q, b_2 \in \R$ are biases. While neural networks are ubiquitous in machine learning, solving \eqref{eq:gg2} for the parameters $\theta = \{W_1, W_2, b_1, b_2\}$ requires in general iterative optimization algorithms, sometimes incurring substantial additional costs.

\paragraph*{Radial Basis Functions.} The model $m_{\theta}$ is chosen as 
\begin{equation} \label{eq:rbf}
m_\theta(x) = \sum_{i=1}^N \alpha_i \psi(\|x-y_i\|_2) + \beta^T x + \delta,
\end{equation}
where $\psi : \R \rightarrow \R$ is a radial basis function,  $\alpha \in \R^N$, $\beta \in \R^n$ and $\delta \in \R$ are parameters, and the point $y_i$ for $i = 1, \dots, N$, are the abscissas of the points in $\mathcal{F}$. Taking $\lambda = 0$, \eqref{eq:gg2} can be efficiently solved for the parameters $\theta = \{ \alpha_1, \dots, \alpha_N, \beta, \delta \}$ by a least-squares solver. 

\section{Numerical results}
\label{sec:numerics}

In this section, we compare our proposed surrogate-acceleration method (\Cref{algorithm5}) to the base method (\Cref{algorithm4}) on the benchmark problem set proposed by \cite{gratton_opm_2021} which is a subset of the well-known CUTEst collection \cite{gould_cutest_2015}.

\subsection{Benchmarking procedure}

The benchmark test set contains 134 problems whose dimension $n$ ranges from 1 to 110 with an average value around 15. To compare the different methods considered, we rely on data profiles with the convergence test
\begin{equation} \label{D}
f(x_0) - f(x) \geq (1-\tau) (f(x_0) - f_{\text{best}}),
\end{equation}
where $x_0 \in \R^n$ is the starting point of the method, $\tau > 0$ (here, $\tau = 10^{-4}$) is a tolerance parameter and $f_{\text{best}}$ is the best function value found among all the methods within a fixed budget of function evaluations; see \cite{more_benchmarking_2009} for more information. Each curve of a data profile gives thus the proportion of problems for which a method found $x \in \R^n$ satisfying the convergence test \eqref{D} depending on the allowed number of function evaluations. In order to account for the difference in dimension among the problems, the number of function evaluations is converted into the number of simplex gradients, where one simplex gradient corresponds to $n+1$ function evaluations.

\subsection{Implementation detail}

\Cref{algorithm4} and \Cref{algorithm5} are implemented with the parameters $\sigma_0 = 1$, $\sigma_{\min} = 10^{-2}$, $\rho=10^{-4}$, $\gamma = \frac{25}{2}$ and $\epsilon = 10^{-5}$. Since training a model based on a large dataset may be computationally expensive or lead to badly-conditioned problems, we limit the number of points in $\mathcal{F}$ and $\mathcal{G}$, ensuring that $N \leq 10 (n+1)$ and $M \leq 10$; when one of these thresholds is reached, the oldest points are removed first.

The shallow NN model is made of one fully connected hidden layer with width $5n$. For the activation functions, following \cite{giovannelli_limitation_2023}, we focused on SoftPlus, SiLU and the sigmoid. Other classical activations such as ReLU were not considered as they do not lead to a differentiable model (see Step 1 of \Cref{algorithm3}). The regularization parameter in the training problem \eqref{eq:gg2} is $\lambda = 10^{-4}$, and \eqref{eq:gg2} is solved with low-memory BFGS, that we stop at iteration $K$ if $\|\nabla L_{\mathcal{F},\mathcal{G}}(\theta_K)\|_2 \leq 10^{-6} \max \{1, \|\nabla L_{\mathcal{F},\mathcal{G}}(\theta_0)\|_2\}$, or when the maximum budget of 1000 iterations is reached. During the first call to \Cref{algorithm3}, the surrogate model is initialized using He initialization \cite{he_delving_2015} for the SoftPlus and SiLU activations, and using Glorot initialization \cite{glorot_understanding_2010} for the sigmoid. All subsequent calls to \Cref{algorithm3} rely on warm-start initialization, i.e., the new surrogate is initialized based on the last trained surrogate.

For the RBF models, we set $\lambda = 0$ in \eqref{eq:gg2} and solved this problem with a direct least-square solver. If there is more than one optimal solution, the one with the coefficients that have the smallest norm is used.


\subsection{Experiments}

Our first experiment highlights the benefits of NN-based surrogate acceleration compared to the base method. For the NN surrogate, we rely on  the SoftPlus activation:
\begin{equation*}
[\phi(x)]_i = \log\big{(} 1 + e^{[x]_i}\big{)}, \; \text{for} \; i=1, \dots, q,
\end{equation*}
where $q$ is the hidden layer dimension. As shown numerically in \Cref{app:A}, this choice of activation leads to the best performance on our problem set.

The data profile comparing the base method (\Cref{algorithm4}) and the NN-accelerated method (\Cref{algorithm5}) is given in \Cref{fig:1}. We also included in \Cref{fig:1} the variant of \Cref{algorithm5} in which Sobolev learning is dismissed, namely, problem \eqref{eq:gg1} is solved instead of \eqref{eq:gg2} in Step 1 of \Cref{algorithm3}, using the same solver and stopping criterion as for \eqref{eq:gg2}. This figure shows that our NN surrogate model outperforms the base method and that Sobolev learning provides better extrapolation steps by taking into account previously computed finite-difference gradient.

\begin{figure}[ht]
\begin{center}
\centerline{\includegraphics[scale=0.5]{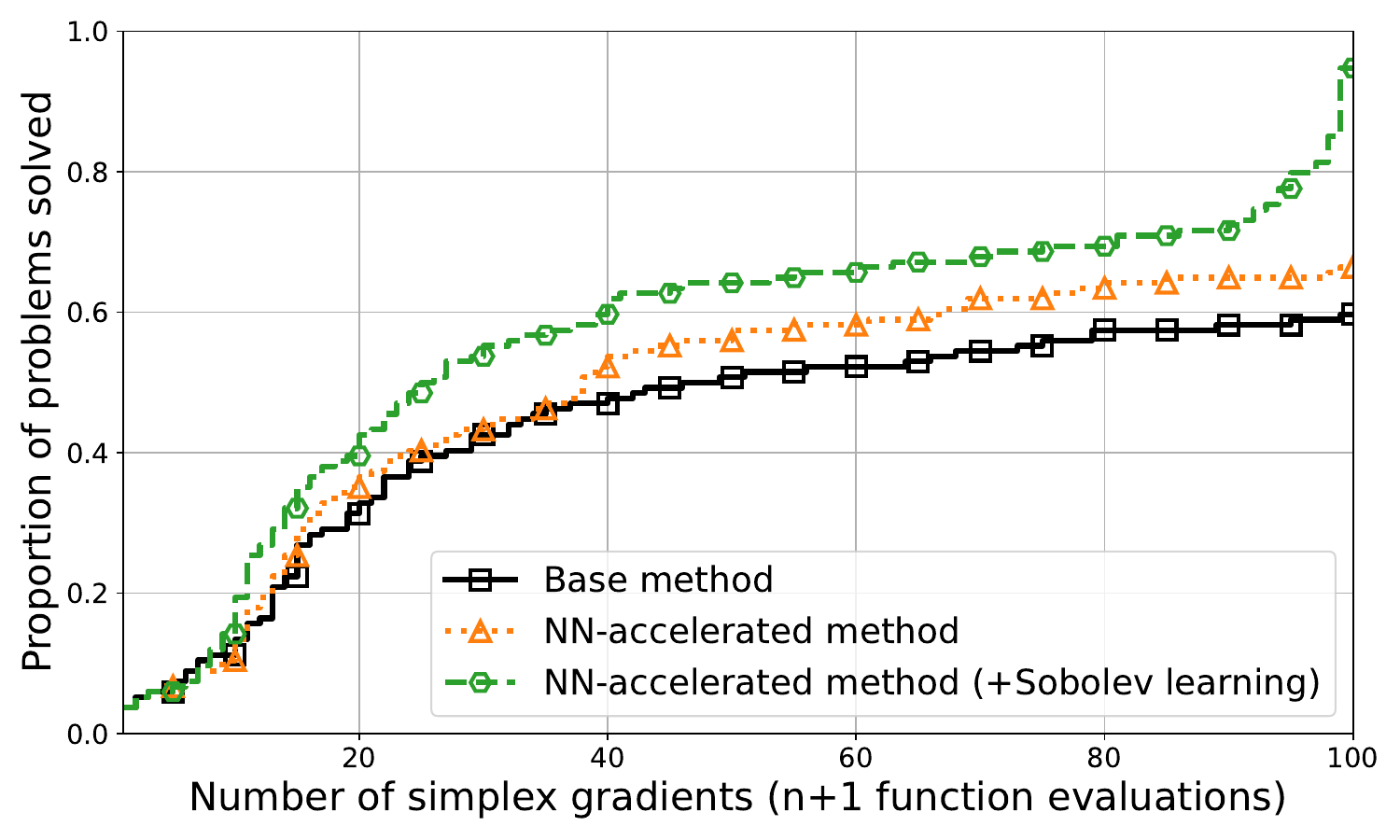}}
\caption{Data profiles comparing the base method to the surrogate-accelerated methods based on SoftPlus shallow neural networks with standard learning and Sobolev learning, for a budget of 100 simplex gradients. }
\label{fig:1}
\end{center}
\vskip -0.2in
\end{figure}

\Cref{fig:2} show the performance of the base method and the surrogate-accelerated methods with the Gaussian RBF approximation model, taking
\begin{equation*}
\psi(r) = e^{-r^2},
\end{equation*}
  using either standard learning or Sobolev learning with finite-difference gradients (other RBFs are also considered in \Cref{app:A}). Similarly to the NN-accelerated method, using a RBF surrogate outperforms the base method. Here again, Sobolev learning allows to improve further the performance of the RBF-accelerated method.

\begin{figure}[ht]
\vskip 0.2in
\begin{center}
\centerline{\includegraphics[scale=0.5]{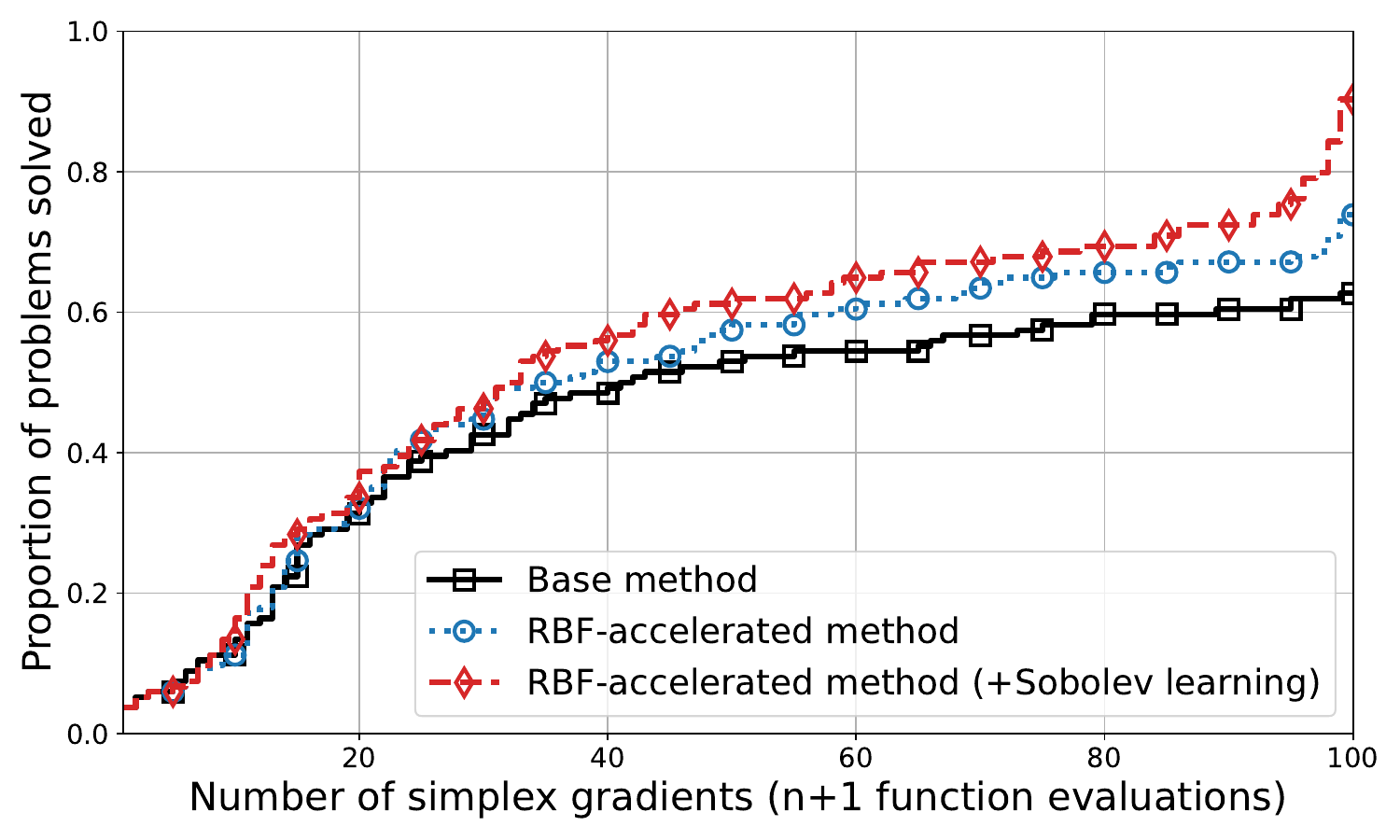}}
\caption{Data profiles comparing the base method to the surrogate-accelerated methods based on Gaussian RBF with standard learning and Sobolev learning, for a budget of 100 simplex gradients. }
\label{fig:2}
\end{center}
\vskip -0.2in
\end{figure}

To validate the worst-case complexity bounds derived in \Cref{sec:surrogate_framework}, we compute for each test problem the surrogate gain defined as
\begin{equation*}
\eta(S(T)) = \frac{1+\frac{S(T)}{2(n+1)}}{1+S(T)},
\end{equation*}
where $S(T)$ is the average number of surrogate steps over $T$ iterations. The surrogate gain $\eta(S(T)) \in [0,1]$ measures the improvement over the base method that is captured by the worst-case complexity bound. Poor surrogate models will almost never be used and therefore give a surrogate gain $\eta(S(T)) \approx 1$, hence only a small improvement in the theoretical performances of the method; in contrast, efficient surrogates will lead to $\eta(S(T)) \ll 1$ and to a significant reduction in the required number of function evaluations to reach a given accuracy. In \Cref{fig:3}, we show the distribution of $\eta(S(T_{\max}))$ on our benchmark problems set, where $T_{\max}$ is the maximum number of iterations performed by the surrogate-accelerated method on a given problem with a budget of $100$ simplex gradients.

\begin{figure}[H]
\vskip 0.2in
\begin{center}
\centerline{\includegraphics[scale=0.5]{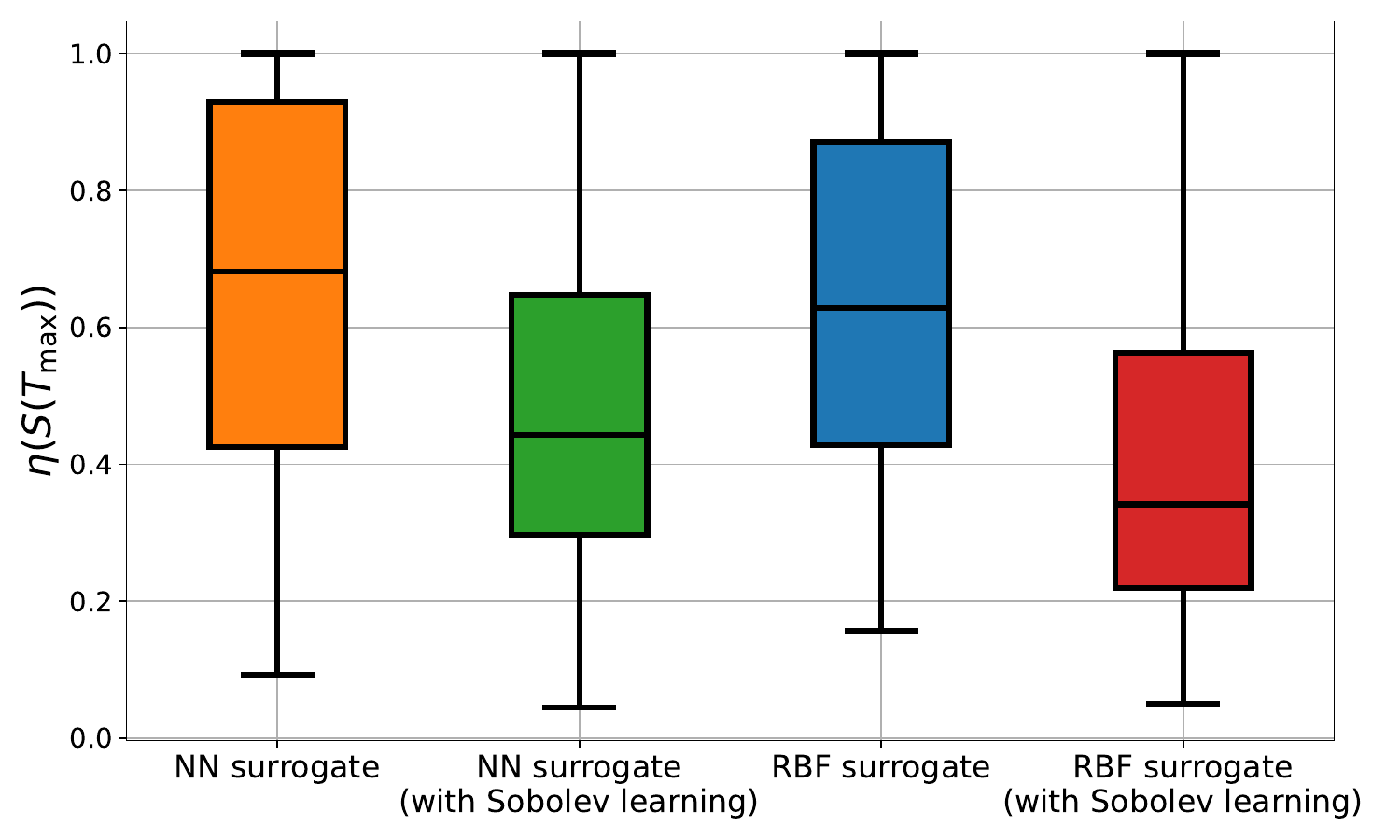}}
\caption{Box plots for the distribution of the surrogate gain over the full test set with a budget of 100 simplex gradients.}
\label{fig:3}
\end{center}
\vskip -0.2in
\end{figure}

We see that $\eta(S(T_{\max}))$ is usually smaller when we incorporate the knowledge of finite-difference gradient approximations, which agrees with the results given in \Cref{fig:1} and \Cref{fig:2}: the estimated surrogate gain $\eta(S(T_{\max}))$ has a median close to 0.4 and 0.3 respectively for the NN and RBF surrogates with Sobolev learning. Without Sobolev learning, the median surrogate gains are closer to one, with approximate values of 0.7 and 0.6 for respectively NN and RBF surrogates.

Finally, \Cref{fig:6} shows that the NN-accelerated method outperforms  the RBF-accelerated method on this test set. In both cases, the gap with respect to the base method increases overall with the maximum budget of function evaluations. Note that this contrasts with the study of the surrogate gain $\eta(S(T_{\max}))$ in \Cref{fig:3}, which predicts a slightly better complexity bound for the RBF, that is not verified numerically. This discrepancy could be explained by the fact that the RBF surrogate steps, even if accepted more often, lead to smaller reductions in the objective than the NN surrogate steps. Note also that NNs and RBFs surrogates may in practice result in very different running times for \Cref{algorithm5}, due to the difference in computational cost to train these two types of models. While for problems where the evaluations of $f$ are very costly, this computational cost will not impact much the overall running time of \Cref{algorithm5}, for applications with less costly function evaluations, the RBF-accelerated method may become substantially faster than the NN-accelerated method. 
\vspace{-0.5cm}
\begin{figure}[H]
\vskip 0.2in
\begin{center}
\centerline{\includegraphics[scale=0.5]{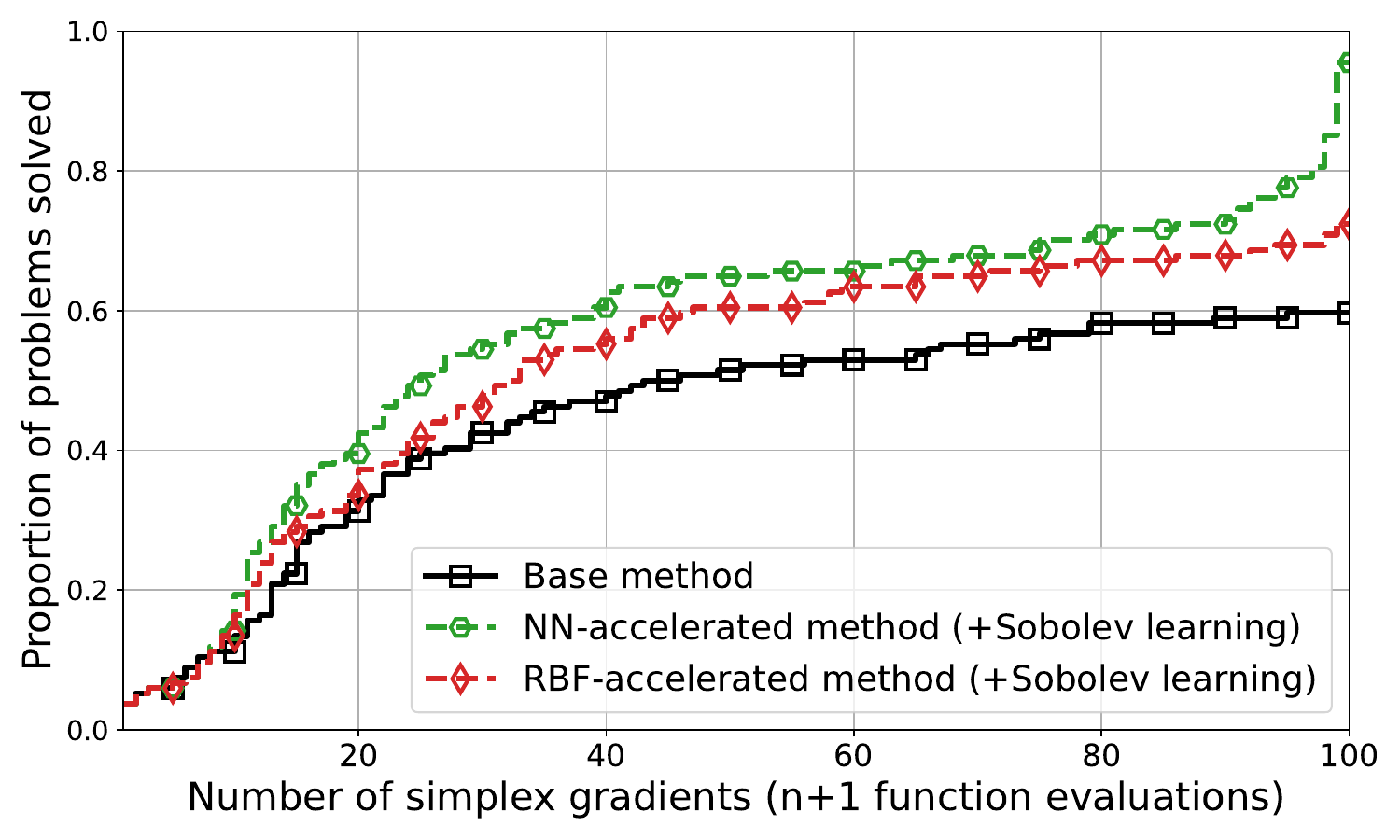}}
\caption{Data profiles comparing  the base method, the surrogate-accelerated methods based on SoftPlus shallow NN and Gaussian RBF with Sobolev learning, for a budget of 100 simplex gradients.}
\label{fig:6}
\end{center}
\end{figure}
\vspace{-1cm}
\section{Concluding remarks}
In this paper, we proposed a surrogate-based heuristic (\Cref{algorithm3}) to enhance the performance of DFO methods that rely on finite-difference gradient approximations. We incorporated the proposed heuristic into an existing DFO method (\Cref{algorithm4}) and showed that the resulting scheme (\Cref{algorithm5}) requires no more than $\mathcal{O}(n\epsilon^{-2})$ function evaluations to find an $\epsilon$-approximate stationary point of the objective function, where the constant factor in the complexity bound is inversely proportional to the average number of surrogate steps. While each outer iteration of the method costs $\mathcal{O}(n)$ function evaluations, each surrogate step costs only a single function evaluation. Thus, our theoretical results support the intuition that improved surrogates lead to greater efficiency in the method, particularly in terms of reducing calls to the zeroth-order oracle. We also tested \Cref{algorithm5} numerically with different types of models and different learning strategies. The results demonstrate that our surrogate-based heuristic can lead to a significant performance improvement over the base method, particularly when data about the approximate gradients is leveraged through Sobolev learning.

\bibliographystyle{plain}  
\bibliography{preprint}

\newpage
\appendix
\section{Supplementary numerical results}
\label{app:A}

 \Cref{fig:4} and \Cref{fig:5} compare surrogate-accelerated methods associated with different choices of activation functions and radial basis functions summarized in \Cref{table:otherModel}.

Overall the performance of the surrogate-accelerated methods does not depend much on the chosen activation or radial basis function, and surrogate-accelerated methods with Sobolev learning are robust with respect to this choice. The SoftPlus and  Gaussian RBF slightly outperform other choices, and are therefore considered in \Cref{sec:numerics} of the paper. 

\begin{figure}[H]
\begin{center}
\centerline{\includegraphics[scale=0.4]{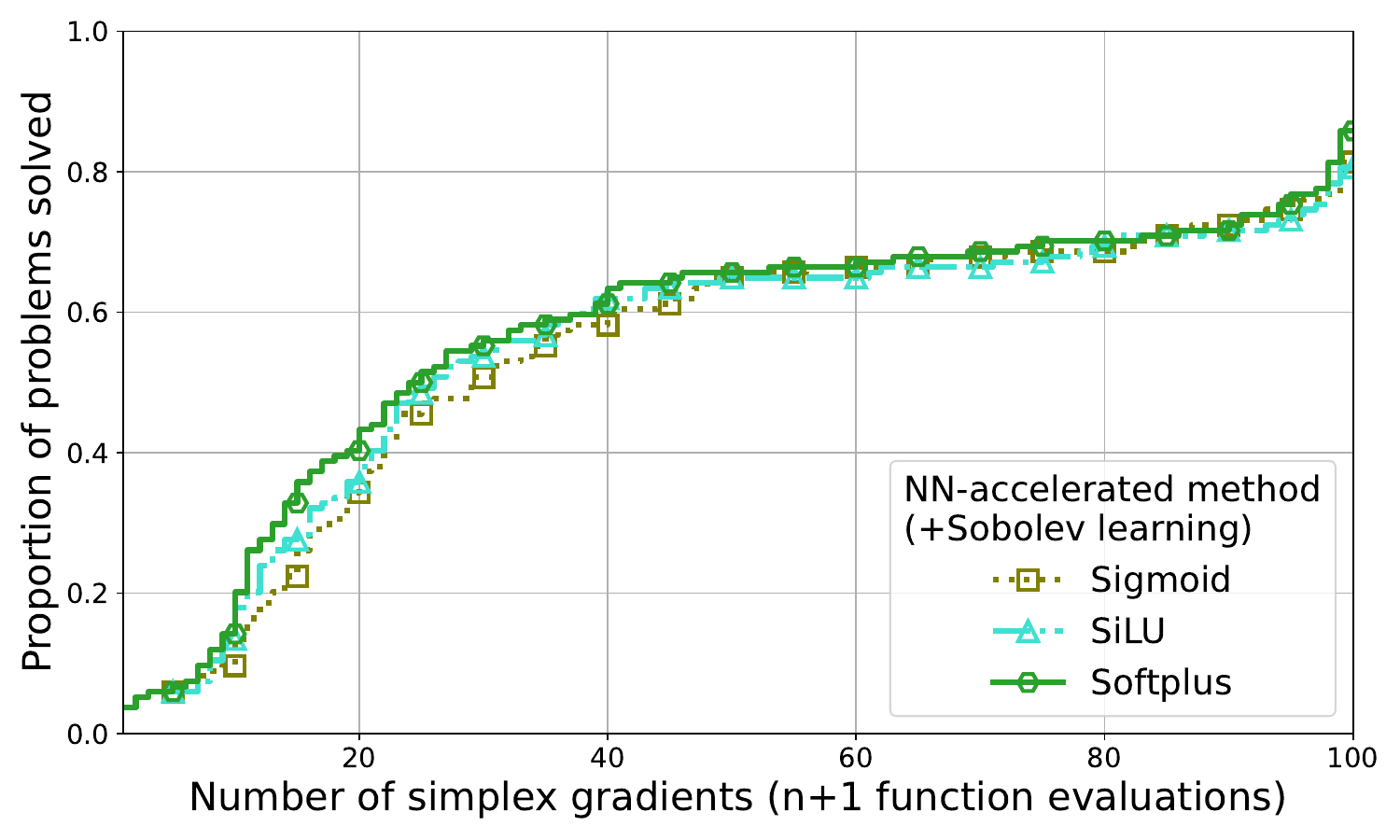}}
\caption{Data profiles for the tolerance $\tau=10^{-4}$ and budget of 100 simplex gradients. We compare different choices of NN models for the accelerated method with Sobolev learning.}
\label{fig:4}
\end{center}
\end{figure}

\begin{figure}[H]
\begin{center}
\centerline{\includegraphics[scale=0.4]{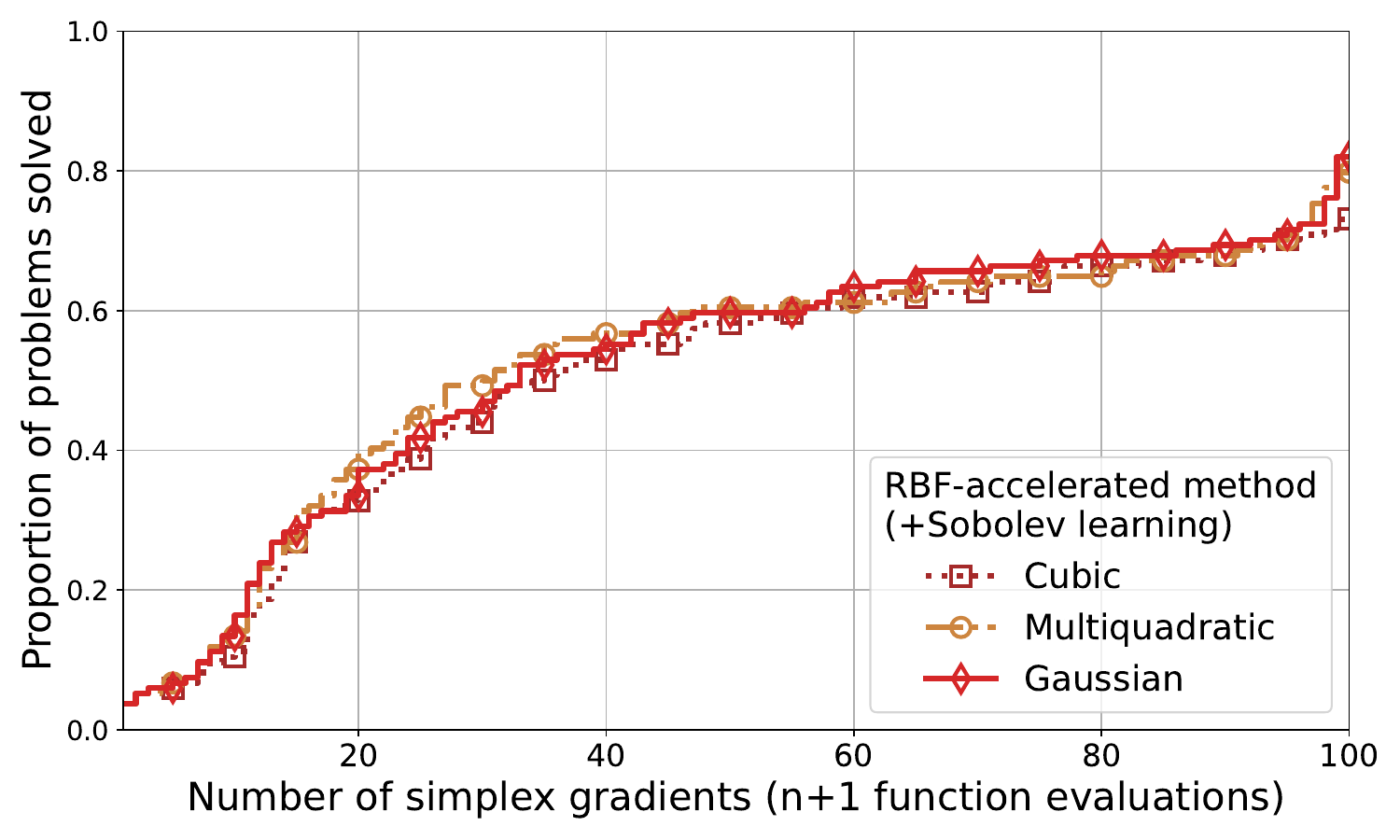}}
\caption{Data profiles for the tolerance $\tau=10^{-4}$ and budget of 100 simplex gradients. We compare different choices of RBF models for the accelerated method with Sobolev learning.}
\label{fig:5}
\end{center}
\end{figure}

\begin{table}[H]
\centering
\begin{tabular}{|c|c|}
\hline
Name & Activation / Radial basis function \\
\hline
\hline
\vphantom{\Big{|}} SoftPlus NN & $[\phi(x)]_i = \log\left( 1 + e^{[x]_i}\right)$ \\
\vphantom{\Big{|}} Sigmoid NN & $[\phi(x)]_i = \frac{1}{1 + e^{[x]_i}}$ \\
\vphantom{\Big{|}} SiLU NN & $[\phi(x)]_i =  \frac{[x]_i}{1 + e^{[x]_i}}$ \\
\hline
\vphantom{\Big{|}} Gaussian RBF & $\psi(r) = \exp( -r^2)$ \\
\vphantom{\Big{|}} Multiquadratic RBF & $\psi(r) = -\sqrt{1+r^2}$ \\
\vphantom{\Big{|}} Cubic RBF & $\psi(r) = r^3$ \\
\hline
\end{tabular}
\caption{List of NN and RBF surrogate models considered.}
\label{table:otherModel}
\end{table}



\end{document}